\def\T{\Gamma} \def\D{\Delta} \def\Th{\Theta}
\def\Ld{\Lambda} \def\E{\Sigma} \def\O{\Omega}
\def\a{\alpha} \def\b{\beta} \def\g{\gamma} \def\d{\delta} \def\e{\varepsilon}
\def\r{\rho} \def\o{\sigma} \def\t{\tau} \def\w{\omega} \def\k{\kappa}
\def\th{\theta} \def\ld{\lambda} \def\ph{\varphi} \def\z{\zeta}
\def\A{$A$~} \def\G{$G$~} \def\H{$H$~} \def\K{$K$~} \def\M{$M$~} \def\N{$N$~}
\def\P{$P$~} \def\Q{$Q$~} \def\R{$R$~} \def\V{$V$~} \def\X{$X$~} \def\Y{$Y$~}
\def\rmA{{\bf A}} \def\rmD{{\bf D}} \def\rmS{{\bf S}} \def\rmK{{\bf K}}
\def\rmM{{\bf M}} \def\Z{{\Bbb Z}} \def\GL{{\bf GL}} \def\C{Cayley}
\def\oa{\ovl A} \def\og{\ovl G} \def\oh{\ovl H} \def\ob{\ovl B} \def\oq{\ovl Q}
\def\oc{\ovl C} \def\ok{\ovl K} \def\ol{\ovl L} \def\om{\ovl M} \def\on{\ovl N}
\def\op{\ovl P} \def\oR{\ovl R} \def\os{\ovl S} \def\ot{\ovl T} \def\ou{\ovl U}
\def\ov{\ovl V} \def\ow{\ovl W} \def\ox{\ovl X} \def\oT{\ovl\T}
\def\lg{\langle} \def\rg{\rangle}
\def\di{\bigm|} \def\Di{\Bigm|} \def\nd{\mathrel{\bigm|\kern-.7em/}}
\def\Nd{\mathrel{\not\,\Bigm|}} \def\edi{\bigm|\bigm|}
\def\m{\medskip} \def\l{\noindent} \def\x{$\!\,$}  \def\J{$-\!\,$}
\def\Hom{\hbox{\rm Hom}} \def\Aut{\hbox{\rm Aut}} \def\Inn{\hbox{\rm Inn}}
\def\Syl{\hbox{\rm Syl}} \def\Sym{\hbox{\rm Sym}} \def\Alt{\hbox{\rm Alt}}
\def\Ker{\hbox{\rm Ker}} \def\fix{\hbox{\rm fix}} \def\mod{\hbox{\rm mod}}
\def\psl{{\bf P\!SL}} \def\Cay{\hbox{\rm Cay}} \def\Mult{\hbox{\rm Mult}}
\def\val{\hbox{\rm Val}} \def\Sab{\hbox{\rm Sab}} \def\supp{\hbox{\rm supp}}
\def\qed{\hfill $\Box$} \def\qqed{\qed\vspace{3truemm}}
\def\CS{\Cay(G,S)} \def\CT{\Cay(G,T)}
\def\h{\heiti\bf} \def\hs{\ziti{E}\bf} \def\st{\songti} \def\ft{\fangsong}
\def\kt{\kaishu} \def\heit{\hs\relax} \def\songt{\st\rm\relax}
\def\fangs{\ft\rm\relax} \def\kaish{\kt\rm\relax} \def\fs{\footnotesize}
\begin{document}

\newtheorem{theorem}{Theorem}[section]
\newtheorem{corollary}[theorem]{Corollary}
\newtheorem{definition}[theorem]{Definition}
\newtheorem{conjecture}[theorem]{Conjecture}
\newtheorem{question}[theorem]{Question}
\newtheorem{lemma}[theorem]{Lemma}
\newtheorem{proposition}[theorem]{Proposition}
\newtheorem{example}[theorem]{Example}
\newtheorem{problem}[theorem]{Problem}
\newenvironment{proof}{\noindent {\bf
Proof.}}{\rule{3mm}{3mm}\par\medskip}
\newcommand{\remark}{\medskip\par\noindent {\bf Remark.~~}}
\newcommand{\pp}{{\it p.}}
\newcommand{\de}{\em}

\def\dfrac{\displaystyle\frac} \def\ovl{\overline}
\def\for{\forall~} \def\exi{\exists~} \def\c{\subseteq}
\def\iif{\Longleftrightarrow} \def\Rto{\Rightarrow} \def\Lto{\Leftarrow}
\def\T{\Gamma} \def\D{\Delta} \def\Th{\Theta}
\def\Ld{\Lambda} \def\E{\Sigma} \def\O{\Omega}
\def\a{\alpha} \def\b{\beta} \def\g{\gamma} \def\d{\delta} \def\e{\varepsilon}
\def\r{\rho} \def\o{\sigma} \def\t{\tau} \def\w{\omega} \def\k{\kappa}
\def\th{\theta} \def\ld{\lambda} \def\ph{\varphi} \def\z{\zeta}
\def\A{$A$~} \def\G{$G$~} \def\H{$H$~} \def\K{$K$~} \def\M{$M$~} \def\N{$N$~}
\def\P{$P$~} \def\Q{$Q$~} \def\R{$R$~} \def\V{$V$~} \def\X{$X$~} \def\Y{$Y$~}
\def\rmA{{\bf A}} \def\rmD{{\bf D}} \def\rmS{{\bf S}} \def\rmK{{\bf K}}
\def\rmM{{\bf M}} \def\Z{{\Bbb Z}} \def\GL{{\bf GL}} \def\C{Cayley}
\def\oa{\ovl A} \def\og{\ovl G} \def\oh{\ovl H} \def\ob{\ovl B} \def\oq{\ovl Q}
\def\oc{\ovl C} \def\ok{\ovl K} \def\ol{\ovl L} \def\om{\ovl M} \def\on{\ovl N}
\def\op{\ovl P} \def\oR{\ovl R} \def\os{\ovl S} \def\ot{\ovl T} \def\ou{\ovl U}
\def\ov{\ovl V} \def\ow{\ovl W} \def\ox{\ovl X} \def\oT{\ovl\T}
\def\lg{\langle} \def\rg{\rangle}
\def\di{\bigm|} \def\Di{\Bigm|} \def\nd{\mathrel{\bigm|\kern-.7em/}}
\def\Nd{\mathrel{\not\,\Bigm|}} \def\edi{\bigm|\bigm|}
\def\m{\medskip} \def\l{\noindent} \def\x{$\!\,$}  \def\J{$-\!\,$}
\def\Hom{\hbox{\rm Hom}} \def\Aut{\hbox{\rm Aut}} \def\Inn{\hbox{\rm Inn}}
\def\Syl{\hbox{\rm Syl}} \def\Sym{\hbox{\rm Sym}} \def\Alt{\hbox{\rm Alt}}
\def\Ker{\hbox{\rm Ker}} \def\fix{\hbox{\rm fix}} \def\mod{\hbox{\rm mod}}
\def\psl{{\bf P\!SL}} \def\Cay{\hbox{\rm Cay}} \def\Mult{\hbox{\rm Mult}}
\def\val{\hbox{\rm Val}} \def\Sab{\hbox{\rm Sab}} \def\supp{\hbox{\rm supp}}
\def\qed{\hfill $\Box$} \def\qqed{\qed\vspace{3truemm}}
\def\CS{\Cay(G,S)} \def\CT{\Cay(G,T)}
\def\h{\heiti\bf} \def\hs{\ziti{E}\bf} \def\st{\songti} \def\ft{\fangsong}
\def\kt{\kaishu} \def\heit{\hs\relax} \def\songt{\st\rm\relax}
\def\fangs{\ft\rm\relax} \def\kaish{\kt\rm\relax} \def\fs{\footnotesize}

\newcommand{\JEC}{{\it Europ. J. Combinatorics},  }
\newcommand{\JCTB}{{\it J. Combin. Theory Ser. B.}, }
\newcommand{\JCT}{{\it J. Combin. Theory}, }
\newcommand{\JGT}{{\it J. Graph Theory}, }
\newcommand{\ComHung}{{\it Combinatorica}, }
\newcommand{\DM}{{\it Discrete Math.}, }
\newcommand{\ARS}{{\it Ars Combin.}, }
\newcommand{\SIAMDM}{{\it SIAM J. Discrete Math.}, }
\newcommand{\SIAMADM}{{\it SIAM J. Algebraic Discrete Methods}, }
\newcommand{\SIAMC}{{\it SIAM J. Comput.}, }
\newcommand{\ConAMS}{{\it Contemp. Math. AMS}, }
\newcommand{\TransAMS}{{\it Trans. Amer. Math. Soc.}, }
\newcommand{\AnDM}{{\it Ann. Discrete Math.}, }
\newcommand{\NBS}{{\it J. Res. Nat. Bur. Standards} {\rm B}, }
\newcommand{\ConNum}{{\it Congr. Numer.}, }
\newcommand{\CJM}{{\it Canad. J. Math.}, }
\newcommand{\JLMS}{{\it J. London Math. Soc.}, }
\newcommand{\PLMS}{{\it Proc. London Math. Soc.}, }
\newcommand{\PAMS}{{\it Proc. Amer. Math. Soc.}, }
\newcommand{\JCMCC}{{\it J. Combin. Math. Combin. Comput.}, }
\newcommand{\GC}{{\it Graphs Combin.}, }

\title{  Trees with given degree sequences that have
minimal subtrees \thanks{ This work is supported by the National
Natural Science Foundation of China (No:10971137)
\newline \indent
 $^{\dagger}$Corresponding author: Xiao-Dong
Zhang (Email: xiaodong@sjtu.edu.cn)}}
\author{Xiu-Mei Zhang$^{1,2}$, Xiao-Dong Zhang$^{1,\dagger}$\\
{\small $^{1}$Department of Mathematics,
 Shanghai Jiao Tong University} \\
{\small  800 Dongchuan road, Shanghai, 200240, P. R. China}\\
{\small $^{2}$Department of Mathematics,Shanghai sandau University}\\
{\small  2727 jinhai road, Shanghai, 201209, P. R. China}\\
 }
\date{}
\maketitle
 \begin{abstract}

In this paper, we investigate the structures of an extremal tree
which has the minimal number of subtrees in the set of all trees
with the given  degree sequence of a tree. In particular, the
extremal trees must be caterpillar and but in general not unique.
Moreover, all extremal trees
   with a given degree sequence $\pi=(d_1, \cdots, d_5, 1,\cdots, 1)$ have
   been characterized.
 \end{abstract}

{{\bf Key words:}  Tree; subtree;  degree sequence; caterpillar;
}\\

{{\bf AMS Classifications:} 05C05, 05C30} \vskip 0.5cm

\section{Introduction}
Let  $T=(V,E)$ be a tree with vertex set $V(T)$ and edge set $E(T)$.
 vertices of degree 1 of $T$ are called {\it leaves}.
 For any two vertices $u, v\in V(T)$, the distance between two vertices $u$ and $
 v$, denoted by $d_T(u,v)$ (or $d(u,v)$
 for short), is length of the unique path $P_{T}(u,v)$ joining $u$ and $v$ in $T$.
   Then
$D(T)=max\{d(u,v)|u,v \in V(T)\}$ is the  diameter of tree $T$.
 Moreover, we
use $N_T(v)$ to indicate the neighbors of vertex $v$ and
$d(v)=|N_T(v)|$ is the degree of $v$. A {\it caterpillar} is a tree,
which has a path, such that every vertex not on the path is adjacent
to some vertex on the path.

For a tree $T=(V(T), E(T))$ and
$v_{1}$,$v_{2}$,\dots,$v_{m-1}$,$v_{m}$ $\in V(T)$, let
$f_{T}(v_{1},v_{2},\dots,v_{m-1},v_{m}$) denote the number of
subtrees of $T$ that contain the vertices $v_{1}, v_{2}, \dots,
v_{m-1}, v_{m}$. In particular, $f_{T}(v)$ denotes the number of
subtrees of $T$ that contain $v$. Let $\varphi(T)$ denote the number
of non-empty subtrees of $T$. For other terminology and notions, we
can follow from \cite{bondy1976}.

The number of subtrees of a tree has received much attention, since
 it can reveal some different
structures and  characterization of a tree. It is well known that
the path and the star $K_{1,n-1}$ have the smallest  and largest
numbers of  subtrees among all trees of order $n$, respectively.
Roughly speaking, the less branched the tree is, the smaller the
number of subtrees.  A observation is that trees with the same
maximum degree appear to be clustered together in this order. One
may wonder which trees have the largest or smallest Wiener index
under the restriction of maximum degree. Kirk and Wang
\cite{kirk2008} characterized the extremal tree with given a order
and maximum vertex degree which have the largest number of subtrees.
Recently, Zhang et.al.  determined the extremal trees with give a
degree sequence that have the largest number of subtrees.  For other
related results, the authors can be referred to \cite{Szelely2005,
Szelely2007}. On the other hand,  Heuberger and Prodinger
\cite{Heuberger2007} presented formulas  to calculate the number of
subtrees of extremal trees among binary trees. Yen and Yeh
\cite{yan2006} gave a linear-time algorithm to count the subtrees of
a tree.  Eistenstat and Gordon \cite{Eisenstat1995} constructed two
non-isomorphic trees that have the same the number of subtrees,
which are related to the greedoid Tutte polynomial of a tree.
Further an interesting fact is that among  above every kind of
trees, the extremal one that maximizes the number of subtrees is
exactly the one that minimizes some chemical indices such as the
well known $Wiener$ $index$ (see \cite{wiener1947} for details) and
vice versa.

Although a counter example has showed in \cite{wagner2007}
that no 'nice' functional relationship exits between these two concepts,
the results are extended to some other kind trees.
Such as, recently, the extremal one that maximizes the number of subtrees
among trees with a given degree sequence are characterized
 in \cite{zhang2011} and the extremal structures
 once again coincide with the once found for the Wiener
 index \cite{Hua2007} and \cite{X.D.Zhang2008}, respectively.
  When the extremal one that maximizes the Wiener index with a given degree sequence
  are istudied in \cite{shi1993} and \cite{zhang2010}.
   Then it is natural to consider the following
question.


\begin{problem}
Given the degree sequence and the number of vertices of a tree, find the lower
bound for the number of subtrees, and characterize all
extremal trees that attain this bound.
\end{problem}

It will not be a surprise to see that such extremal trees coincide
with the ones that attain the maximal Wiener index. The rest of the
paper is organized as follows. In Section 2, we prove that a minimum
optimal tree must be a caterpillar. In Section 3, we discuss some
properties of the extremal tree with minimal (maximal) number of
subtrees among caterpillar trees with given order and degree
sequence. In Section 4, the extremal trees with minimal subtrees
among given degree sequence $\pi=(d_1,d_2, \cdots, d_n)$, where $d_1
\ge \cdots d_k \ge 2>d_{k+1}=1$ and $k\le 5$ are characterized.
Moreover, the extremal minimal trees are not unique.

\section{Properties of optimal minimal trees with a given degree sequence }

For a nonincreasing sequence of positive integers $\pi=(d_1, d_2,\cdots, d_k,\cdots d_{n})$,
 $d_1\ge d_2\ge \cdots \ge d_k\ge 2$ and $d_{k+1}=\cdots =d_n=1$.
 If $\pi$ is the degree sequence of a tree, let ${\mathcal{T}}_{\pi}$ denote the set of all trees with $\pi$ as its degree sequence.
For convenience, we refer to trees that maximize (minimize) the number of subtrees as
 maximum (minimum) optimal. The main result of this section can be
 stated as follow.

\begin{theorem}\label{theorem2-1}  Let
  $\pi=(d_1, d_2,\cdots,
d_k,\cdots
 d_{n})$ be the degree sequence of a tree
 with  $d_1\ge d_2\ge \cdots \ge d_k\ge 2$ and $d_{k+1}=\cdots =d_n=1$.
If $T^{\ast} $ is  a minimum optimal tree in  $
{\mathcal{T}}_{\pi},$ then $T^{\ast}$ must be a caterpillar.
\end{theorem}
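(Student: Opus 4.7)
The plan is to argue by contradiction. Suppose $T^{\ast} \in \mathcal{T}_\pi$ is a minimum optimal tree that is not a caterpillar; the goal is to exhibit some $T' \in \mathcal{T}_\pi$ with $\varphi(T') < \varphi(T^{\ast})$.

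First I fix a diametral path $P = u_0 u_1 \cdots u_D$ in $T^{\ast}$, so that $u_0$ and $u_D$ are leaves. Since $T^{\ast}$ is not a caterpillar, some non-leaf vertex $w$ lies outside $V(P)$. Among all such vertices I choose $w$ at maximum distance from $P$. Then the only non-leaf neighbor of $w$ is the unique vertex $p$ on the path from $w$ to $P$; the remaining $d(w)-1$ neighbors $\ell_1, \ldots, \ell_{d(w)-1}$ are all leaves. This simple ``star-with-tail'' structure at $w$ is the feature the transformation exploits.

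The key local move is the edge swap $T^{\ast} \to T'$: delete $u_0 u_1$ and $wp$, insert $u_0 p$ and $u_1 w$. Since $P$ is diametral, $w$ cannot be adjacent to $u_1$ (otherwise the path $\ell_1 w u_1 u_2 \cdots u_D$ would exceed $P$), so $p \neq u_1$; and clearly $p \neq u_0$. Hence neither new edge preexists. Deleting the two edges splits $T^{\ast}$ into three components $\{u_0\}$, $W = \{w, \ell_1, \ldots, \ell_{d(w)-1}\}$, and a main component $M$ containing both $u_1$ and $p$, and the two added edges reconnect them into a single tree $T'$. A routine degree check shows that each vertex keeps its degree, so $T' \in \mathcal{T}_\pi$.

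For the subtree count, I iterate the identity $\varphi(T) = \varphi(T - e) + f_{T-e}(x)\,f_{T-e}(y)$ across the two affected edges in each of $T^{\ast}$ and $T'$. Using that $W$ is a star $K_{1,d(w)-1}$, so $f_W(w) = 2^{d(w)-1}$, the common terms cancel and I expect to obtain
$$\varphi(T^{\ast}) - \varphi(T') \;=\; \bigl(2^{d(w)-1} - 1\bigr)\bigl(f_M(p) - f_M(u_1)\bigr).$$
Since $d(w) \geq 2$, the first factor is at least $1$, so it suffices to establish $f_M(p) > f_M(u_1)$. For this I would use the multiplicative formula $f_M(v) = \prod_{x \in N_M(v)}\bigl(1 + f_{M_x}(x)\bigr)$, where $M_x$ is the component of $M - v$ containing $x$, and compare the two products factor-by-factor. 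If the strict inequality fails at the $u_0$-end, the symmetric swap at the $u_D$-end (replacing $u_0 u_1$ by $u_{D-1} u_D$ and $u_0 p$ by $u_D p$, etc.) yields the mirror quantity $(2^{d(w)-1} - 1)\bigl(f_{M'}(p) - f_{M'}(u_{D-1})\bigr)$, and a short comparison argument shows at least one of the two ends must strictly favor $p$, forcing $\varphi(T') < \varphi(T^{\ast})$.

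The main obstacle I anticipate is proving the strict inequality $f_M(p) > f_M(u_1)$ (or its mirror), rather than just $\geq$. This is precisely where the maximum-distance choice of $w$ must be used: it guarantees that beyond $p$ in $M$ there is a non-trivial branch which has no counterpart beyond $u_1$, so that the product expansion of $f_M(p)$ contains a strictly larger factor. Managing the case where both spine endpoints carry branching heavier than $p$ will require a careful compensating argument between the two symmetric swaps; once that bookkeeping is done, the contradiction with minimality of $\varphi(T^{\ast})$ is immediate.
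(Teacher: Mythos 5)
Your reduction is set up correctly: the double swap (delete $u_0u_1$ and $wp$, add $u_0p$ and $u_1w$) does produce a tree with the same degree sequence, and splitting the subtrees of each tree according to which of the two modified edges they use gives exactly your identity
$$\varphi(T^{\ast})-\varphi(T')=\bigl(f_W(w)-1\bigr)\bigl(f_M(p)-f_M(u_1)\bigr)=\bigl(2^{d(w)-1}-1\bigr)\bigl(f_M(p)-f_M(u_1)\bigr).$$
The gap is exactly where you flagged it, but it is not a bookkeeping issue: the dichotomy you need --- that at least one of the two end-swaps has $f_M(p)>f_M(u_1)$ (resp.\ $f_{M'}(p)>f_{M'}(u_{D-1})$) --- is false. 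Let $T$ be the path $u_0u_1\cdots u_6$ with a pendant leaf $x$ at $u_1$, a pendant leaf $y$ at $u_5$, and a pendant path $u_3\,q\,w\,\ell$ attached at $u_3$. This $T$ is not a caterpillar, $P=u_0\cdots u_6$ is diametral, and your rule selects $w$ (the unique non-leaf off $P$, at distance $2$ from $P$), so $p=q$ and $f_W(w)=2$. Here $M=T-u_0-\{w,\ell\}$, and since $q$ is a leaf of $M$, $f_M(q)=1+(1+3)(1+5)=25$, while $f_M(u_1)=2\bigl(1+(1+2\cdot 6)\bigr)=28$; so the swap at the $u_0$-end \emph{increases} $\varphi$ by $3$. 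Because $T$ admits the automorphism $u_i\mapsto u_{6-i}$, $x\mapsto y$, the swap at the $u_6$-end also increases $\varphi$ by $3$. Hence neither of your two moves improves this tree, and no ``compensating argument between the two symmetric swaps'' can exist. The structural reason is that your deepest-vertex choice can force $p$ to be an off-spine vertex of degree $2$, hence essentially a leaf of $M$, so $f_M(p)$ loses to $f_M(u_1)$ and to $f_{M'}(u_{D-1})$ simultaneously.

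The paper's proof uses a different local move and a different choice of off-path vertex: it takes $y$ to be a non-leaf vertex off the longest path $v_0\cdots v_r$ that is \emph{adjacent} to a path vertex $v_l$, and transplants all neighbors of $y$ other than $v_l$ onto the endpoint $v_r$; only $y$ and $v_r$ exchange degrees, and the change in $\varphi$ factors as $(1-a)$ times a difference of partial sums along $v_l\cdots v_r$ whose sign is governed by the single comparison $f_{W_1}(v_l)$ versus $f_{W_2}(v_{l+1})$, which can be arranged by choosing the orientation of the path. (In the example above this move succeeds immediately.) If you want to keep the edge-swap route, you must either allow the choice of diametral path to vary --- in the example, running your procedure on the diametral path through $\ell$ does yield a strictly improving swap --- or replace the target $u_1$ by a vertex you can provably beat; as written, the step ``at least one of the two ends must strictly favor $p$'' is not repairable.
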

\begin{proof} Let $T^{\ast} \in {\mathcal{T}_{\pi}}$ be a minimum optimal tree.
If the diameter $D(T^{\ast})$ is equal to 2, then $T^{\ast}$ is
$K_{1,n-1}$, and also is caterpillar. If $D(T^{\ast})=3$, then the
degree sequence of $T^{\ast}$ must be $\pi=(d_1,d_2, 1,\cdots, 1)$
and $ d_1\ge d_2\ge 2$. It is easy to see that  $T^{\ast}$ is a
caterpillar. Hence we only need to prove the assertion for
$D(T^{\ast})\ge 4$ and at least three internal vertices. Let
 $P=v_0v_1v_2\cdots v_r$ be the longest  path in $T^{\ast}$, where
 $r=D(T^{\ast})$. Then
  $d(v_0)=d(v_{r})=1$  and $d(v_1), \cdots, d(v_{r-1})\ge 2$,
  which implies that there are at
  least $r-1$ vertices with at least degree 2. So $k\ge r-1$. Now we
  have the following claim

  {\bf Claim: }  $k=r-1$.

If $k>r-1$ then $T^{\ast}$ is not a caterpillar. Thus there exists a
vertex $y\not\in V(P)$ and  $2\le l\le r-2$ such that the edge
$yv_l\in E(T)$ and $N_T(y)=\{v_l,x_1,x_2,\cdots,x_s\}, s\ge1$.
Moreover, $T-\{v_lv_{l+1}, v_ly\}$ has  three connected components,
$W_1, W_2, W_3$ which contain $v_l, v_{l+1}, y$, respectively.
Without loss of generality, we assume that
$f_{T_1}(v_l)>f_{T_2}(v_{l+1})$. Further  let $V_i$ be the connected
component of $T-\{v_{i-1}v_{i}, v_iv_{i+1}\}$ containing vertex
$v_i$  and $a_i=f_{V_i}(v_i)$  for $l\le i\le r-1$ (for convenience,
$a_r=1$). Moreover, denote $b_l=f_{W_1}(v_l)$ and $a=f_{W_3}(y)>1$.
Then
\begin{equation}\label{equation2-1}
b_l> a_{l+1}(1+a_{l+2}+a_{l+2}a_{l+3}+\cdots+a_{l+2}a_{l+3}\cdots
a_{r}).
\end{equation}
Let $T^{\prime}$ be a tree with degree sequence $\pi$ obtained  from
$T^{\ast}$ by deleting the edges $yx_1,yx_2,\cdots,$ $yx_s$ in
$T^{\ast}$ and adding the edges $v_rx_1,v_rx_2,\cdots,v_rx_s$.
Obviously, $T^{\prime}\in {\mathcal{T}}_{\pi}$.
 Let $W_3^{\prime}$ be the
connected component of $T^{\prime}-\{v_{r-1}v_{r}\}$ containing vertex $v_r$.
Then $W_3^{\prime}$ is isomorphic to $W_3$ and
$f_{W_3^{\prime}}(v_r)=f_{W_3}(y)=a$. Clearly, the number of
subtrees of $T^{\ast}$ with containing $v_r, y$  is equal to the
number of
 subtrees of $T^{\prime}$ with containing $v_r, y$. The number of
  of subtrees of $T^{\ast}$ without containing $v_r, y$ is equal
  to the number of
 subtrees of $T^{\prime}$ without containing $v_r, y$.
 The number of
 of subtrees of $T^{\ast}$ with containing $y$ and no containing
 $v_r$ is equal to $a(1+b_l+b_la_{l+1}+\cdots +b_la_{l+1}\cdots
a_{r-1})$, while  the number of
 of subtrees of $T^{\prime}$ with containing $y$ and no containing
 $v_r$ is equal to $1+b_l+b_la_{l+1}+\cdots
+b_la_{l+1}\cdots a_{r-1}$. The number of
 of subtrees of $T^{\ast}$ with containing $v_r$ and no containing
 $y$ is equal to  $1+a_{r-1}+a_{r-1}a_{r-2}+\cdots+a_{r-1}a_{r-2}\cdots
a_{l+1}b_l$ , while  the number of
 of subtrees of $T^{\prime}$ with containing $v_r$ and no containing
 $y$ is equal to $a(1+a_{r-1}+a_{r-1}a_{r-2}+\cdots+a_{r-1}a_{r-2}\cdots
a_{l+1}b_l)$. Hence by equation (\ref{equation2-1}) and $a>1$,
\begin{eqnarray*}
\varphi(T^{\prime})-\varphi(T^{\ast})&=&
(1-a)[b_l+b_la_{l+1}+b_la_{l+1}a_{l+2}+\cdots
+b_la_{l+1}a_{l+2}\cdots a_{r-2}\\
&&-a_{r-1}-a_{r-1}a_{r-2}-\cdots -a_{r-1}a_{r-2}\cdots\ a_{l+1}]\\
&&<0.
\end{eqnarray*}
It contradicts to $T^{\ast}$ being an minimum optimal tree. Hence
the claim holds and  $T^{\ast}$ is a caterpillar.
\end{proof}

\section{Properties of optimal trees
among caterpillars with a given degree sequence}

In this section, we study some properties of optimal minimal
(maximal) trees in the set of all caterpillars for a given degree
sequence, since an optimal minimal tree in the set of all trees with
a given degree sequence must be caterpillar. For graphic sequence
$\pi=(d_1, d_2,\cdots, d_k,\cdots d_{n})$ of a tree with $d_1\ge
d_2\ge \cdots \ge d_k\ge 2$ and $d_{k+1}=\cdots =d_n=1$ with $k\ge
2$, let
$$\mathcal{C}_{\pi}=\{T: \ \ T \ {\rm is\  a\  caterpillar
\ with\ degree\ sequence }\ \pi\}.$$
 If $(y_1,y_2,\cdots,y_k)$ is a permutation of $(d_1-2, d_2-2,\cdots, d_k-2)$,
 then the caterpillar $C(y_1, \cdots, y_k)$ is obtained from a path $v_0v_1v_2\cdots v_kv_{k+1}$
by adding $y_1, \cdots, y_{k}$ pendent edges at $v_1, \cdots, v_k$,
respectively. Clearly, $C(y_1, \cdots, y_k)\in \mathcal{C}_{\pi}$.
Conversely, for any $T\in\mathcal{C}_{\pi}$, $T$ can be obtained in
this way. Moreover, let $V_j$, $V_{\ge j}$ and $V_{\le j}$ denote
the connected component of $C(y_1,y_2,\cdots,y_k)$ containing $v_j$
after deleting the two edges $v_{j-1}v_j$ and $v_{j}v_{j+1}$,
 the edge $v_{j-1}v_j  $, and the edge $v_{j}v_{j+1}$, respectively,  for
$ j=1, \cdots, k$. For convenience, let $V_0=V_{\le 0}=\{v_0\}$ and
$V_{k+1}=V_{\ge k+1}=\{v_{k+1}\}$.
\begin{lemma}\label{lemma3-2}
  Let $T$ be a tree $C(y_1, \cdots, y_k)$ in $\mathcal{C}_{\pi}$ with
the spine $v_0v_1\cdots v_{k+1}$.
   If there exists a $2 \le p \le k-1$  such that $f_{V_{p-i}}(v_{p-i})\ge
  f_{V_{p+i}}(v_{p+i})$ for $i=1, \cdots, q$ and $q \le min\{k-p,p-1\}$  with at least one strict
  inequality  and $f_{V_{\le p-q-1}}(v_{p-q-1})>
  f_{V_{\ge p+q+1}}(v_{p+q+1})$, then there exists
a caterpillar $T_1\in \mathcal{C}_{\pi}$ such that
$$\varphi(T_1)<\varphi(T).$$
  \end{lemma}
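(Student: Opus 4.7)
The approach is to construct $T_1$ from $T$ by a reflection around $v_p$: for each $i = 1, \ldots, q$, exchange the pendant set hanging at $v_{p-i}$ with the pendant set hanging at $v_{p+i}$. Since this only permutes the multiset $(y_1, \ldots, y_k)$, the resulting tree $T_1$ remains a caterpillar in $\mathcal{C}_\pi$, and its spine weights satisfy $a'_l = a_{2p-l}$ for $l \in \{p-q, \ldots, p+q\}$ and $a'_l = a_l$ otherwise, where I write $a_l = f_{V_l}(v_l) = 2^{y_l}$ for $1 \le l \le k$ and $a_0 = a_{k+1} = 1$.

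I would then compute $\varphi(T)$ by classifying each subtree according to the (possibly empty) sub-path $\{v_i, v_{i+1}, \ldots, v_j\}$ of the spine it contains, giving
$$\varphi(T) = \sum_{0 \le i \le j \le k+1} \prod_{l=i}^{j} a_l + \sum_{l=1}^{k} y_l,$$
where the second sum counts the single-vertex non-spine-pendant subtrees and is an invariant of $\pi$. To evaluate $\varphi(T) - \varphi(T_1)$ I split the index pairs $(i, j)$ in the main sum according to where the interval $[i, j]$ meets the window $W = \{p-q, \ldots, p+q\}$: entirely outside $W$, entirely inside $W$, straddling both ends of $W$ (i.e.\ $i < p-q$ and $j > p+q$), or overlapping exactly one end of $W$.

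Pairs entirely outside $W$ obviously contribute equal summands on both sides. For pairs entirely inside $W$, the reflection $l \mapsto 2p-l$ is a bijection on the index set, so after reindexing the two sums match. For pairs straddling both ends of $W$, the "inside part" of the product is the full window product $\prod_{l \in W} a_l$, which is invariant under the reflection. The surviving net difference thus comes only from the two "half-overlapping" families. Factoring each such product as an outside piece (ending at $p-q-1$ or starting at $p+q+1$) times an inside piece (starting at $p-q$ or ending at $p+q$), and using that the $a'$-version of an inside piece reindexes via $l \mapsto 2p-l$ into the opposite-anchored subproduct, one obtains the clean identity
$$\varphi(T) - \varphi(T_1) = (L-R)(P-Q),$$
where $L = f_{V_{\le p-q-1}}(v_{p-q-1})$, $R = f_{V_{\ge p+q+1}}(v_{p+q+1})$, and
$$P = \sum_{j=p-q}^{p+q} \prod_{l=p-q}^{j} a_l, \qquad Q = \sum_{i=p-q}^{p+q} \prod_{l=i}^{p+q} a_l.$$

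The hypothesis directly gives $L > R$, so it suffices to prove $P > Q$. I would do this by pairing the $s$-th summand of $P$, which is the product of $a_l$ over the interval $[p-q, p-q+s]$, with the $s$-th summand of $Q$, which is the product over its mirror image $[p+q-s, p+q]$. Applying $a_{p-i} \ge a_{p+i}$ factor-by-factor along this pairing shows each paired $P$-term dominates its $Q$-counterpart, and the "at least one strict" clause of the hypothesis yields at least one strict domination. Hence $P > Q$, and therefore $\varphi(T_1) < \varphi(T)$. The main obstacle will be the case analysis and the reindexing that produce the clean factorisation $(L-R)(P-Q)$; once that is in place, the conclusion follows from an elementary monotonicity argument.
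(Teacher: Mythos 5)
Your construction of $T_1$ (reversing the window $\{v_{p-q},\dots,v_{p+q}\}$) is exactly the exchange the paper performs by detaching that middle block and reattaching it flipped, and your identity $\varphi(T)-\varphi(T_1)=(L-R)(P-Q)$ together with $P>Q$ is precisely the content of Lemmas 3.1 and 3.2 of Kirk and Wang that the paper cites (note $P=f_W(v_{p-q})$ and $Q=f_W(v_{p+q})$ for the middle block $W$). So the proposal is correct and essentially the same argument, merely carried out self-contained rather than by citation; the only point to be careful about is that your ``factor-by-factor'' comparison of $P_s$ with $Q_s$ requires first cancelling the common factors when the two index intervals overlap, after which the surviving factors all satisfy $a_{p-i}\ge a_{p+i}$ with $i\ge 1$.
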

\begin{proof}   Let $W$ be the connected
component of $T$ by deleting the two edges $v_{p-q-1}v_{p-q}$ and
$v_{p+q}v_{p+q+1}$ and containing vertices $v_{p-q}$ and $ v_{p+q}$.
Let $X$ be obtained from the $V_{\le p-q}$ by adding the edge
$v_{p-q-1}v_{p-q}$ and let
 $Y$  be obtained from the $V_{\ge p+q+1}$ by adding the edge
 $v_{p+q}v_{p+q+1}$. Then $f_X(v_{p-q})>f_Y(v_{p+q})$. Further,
 by Lemma 3.1 in \cite{kirk2008}, $f_W(v_{p-q})>f_W(v_{p+q})$.
 Now let $T_1$ be the caterpillar from $T$ by deleting two edges
 $v_{p-q-1}v_{p-q}$ and $v_{p+q}v_{p+q+1}$ and adding two edges
 $v_{p-q-1}v_{p+q}$ and $v_{p+q}v_{p-q-1}$. Then $T_1\in
 \mathcal{C}_{\pi}$. By Lemma 3.2 in \cite{kirk2008},
 $\varphi(T_1)<\varphi(T)$. Hence the assertion holds.
\end{proof}

Similarly, we can prove the following assertion by the same method
and omit the detail.

\begin{lemma}\label{lemma3-2-1}
 Let $T$ be a tree $C(y_1, \cdots, y_k)$ in $\mathcal{C}_{\pi}$ with
the spine $v_0v_1\cdots v_{k+1}$.
   If there exists a $1 \le p \le k-1$  such that $f_{V_{p-i}}(v_{p-i})\ge
  f_{V_{p+i+1}}(v_{p+i+1})$ for $i=0, 1, \cdots, q$ and
  $q \le min\{k-p-1, p-1\}$  with at least one strict
  inequality  and $f_{V_{\le p-q-1}}(v_{p-q-1})>
  f_{V_{\ge p+q+2}}(v_{p+q+2})$, then there exists
a caterpillar $T_1\in \mathcal{C}_{\pi}$ such that
$$\varphi(T_1)<\varphi(T).$$
  \end{lemma}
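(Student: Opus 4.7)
The plan is to mimic the proof of Lemma~\ref{lemma3-2} verbatim, with the single change that the ``middle block'' of the caterpillar is now centered on the \emph{edge} $v_pv_{p+1}$ instead of on a single vertex $v_p$. First I would define the swap: let $T_1$ be obtained from $T$ by deleting the two edges $v_{p-q-1}v_{p-q}$ and $v_{p+q+1}v_{p+q+2}$ and adding the two edges $v_{p-q-1}v_{p+q+1}$ and $v_{p-q}v_{p+q+2}$. Intuitively this reverses the block of spine vertices $v_{p-q},v_{p-q+1},\ldots,v_{p+q+1}$ while fixing everything outside the block, so every vertex keeps its old degree and $T_1\in\mathcal{C}_\pi$ is again a caterpillar with degree sequence $\pi$.

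Next I would let $W$ denote the connected component of $T-\{v_{p-q-1}v_{p-q},\,v_{p+q+1}v_{p+q+2}\}$ that contains the block, and let $X$, $Y$ be the ``rooted left/right tails'' obtained from $V_{\le p-q-1}$ and $V_{\ge p+q+2}$ by reattaching $v_{p-q}$ and $v_{p+q+1}$ through the deleted edges, respectively, in complete analogy with the tails used in the proof of Lemma~\ref{lemma3-2}. The strict hypothesis $f_{V_{\le p-q-1}}(v_{p-q-1})>f_{V_{\ge p+q+2}}(v_{p+q+2})$ then transfers directly to the comparison $f_X(v_{p-q})>f_Y(v_{p+q+1})$.

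The main obstacle, just as in Lemma~\ref{lemma3-2}, is to derive the ``interior'' comparison $f_W(v_{p-q})>f_W(v_{p+q+1})$ from the chain of hypotheses $f_{V_{p-i}}(v_{p-i})\ge f_{V_{p+i+1}}(v_{p+i+1})$ for $i=0,1,\ldots,q$ with at least one strict inequality. I expect this to drop out by telescoping inward along the block, using exactly the same iterated application of Lemma~3.1 of Kirk--Wang \cite{kirk2008} invoked in the proof of Lemma~\ref{lemma3-2}; the only difference is that the block has even length $2q+2$ and is anchored on the edge $v_pv_{p+1}$, which shifts the pairing indices by one throughout the induction. Once both $f_X(v_{p-q})>f_Y(v_{p+q+1})$ and $f_W(v_{p-q})>f_W(v_{p+q+1})$ are in hand, a final application of Lemma~3.2 of Kirk--Wang \cite{kirk2008} — the tail-swap inequality already exploited in Lemma~\ref{lemma3-2} — yields $\varphi(T_1)<\varphi(T)$, which is the desired conclusion.
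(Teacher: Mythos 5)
Your proposal is correct and follows exactly the route the paper intends: the paper proves Lemma~\ref{lemma3-2-1} only by remarking that it is "similar" to Lemma~\ref{lemma3-2}, and your adaptation — reversing the even-length block $v_{p-q},\dots,v_{p+q+1}$ centered on the edge $v_pv_{p+1}$, deriving $f_W(v_{p-q})>f_W(v_{p+q+1})$ from the shifted pairing via Lemma~3.1 of Kirk--Wang, and then applying their Lemma~3.2 tail-swap inequality — is precisely the required modification. Your explicit edge replacement (deleting $v_{p-q-1}v_{p-q}$, $v_{p+q+1}v_{p+q+2}$ and adding $v_{p-q-1}v_{p+q+1}$, $v_{p-q}v_{p+q+2}$) is in fact cleaner than the paper's own (typo-marred) description of the analogous swap in Lemma~\ref{lemma3-2}.
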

It follows from Lemmas~\ref{lemma3-2} and \ref{lemma3-2-1} that we
have got a property of an optimally minimal caterpillar tree in
$\mathcal{C}_{\pi}$.

\begin{corollary}\label{cor3-3}
Let $T$ be a minimum optional tree  $C(z_1, \cdots, z_k)$ with the
spine
  $v_0v_1v_2\cdots v_kv_{k+1}$ in $\mathcal{C}_{\pi}$.

  (i).    If  there exists a $2 \le p \le k-1$ such that
  $f_{V_{p-i}}(v_{p-i})\ge \ \ ({\rm or}\  \le)\ \
  f_{V_{p+i}}(v_{p+i})$ for $i=1, \cdots, q$ and $q \le min\{k-p,p-1\}$ with at least one strict
  inequality, then
  \begin{equation}\label{cor3-1-1}f_{V_{\le p-q-1}}(v_{p-q-1})\le \ \ ({\rm or}\
  \ge)\ \
     f_{V_{\ge p+q+1}}(v_{p+q+1}).
     \end{equation}

  (ii). If there exists a  $1 \le p \le k-1$ such that
  $f_{V_{p-i}}(v_{p-i})\ge\ \ ({\rm or}\ \le)\ \
  f_{V_{p+i+1}}(v_{p+i+1})$  $i=0, \cdots, q$ and
  $q \le min\{k-p-1,p-1\}$ with at least one strict
  inequality, then
  \begin{equation}\label{cor3-1-2}f_{V_{\le p-q-1}}(v_{p-q-1})\le \ \ ({\rm or}\ \ge)\ \
  f_{V_{\ge p+q+2}}(v_{p+q+2}).
  \end{equation}
\end{corollary}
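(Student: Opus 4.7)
The plan is to derive both parts of Corollary~\ref{cor3-3} by contradiction, invoking Lemmas~\ref{lemma3-2} and \ref{lemma3-2-1} directly, and to handle the ``$\le$'' variants via a spine-reflection symmetry.

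First, for part (i) under the ``$\ge$'' hypothesis, I would argue by contradiction: suppose instead that $f_{V_{\le p-q-1}}(v_{p-q-1}) > f_{V_{\ge p+q+1}}(v_{p+q+1})$. Then every hypothesis of Lemma~\ref{lemma3-2} is met verbatim --- the range of $p$, the inequalities $f_{V_{p-i}}(v_{p-i}) \ge f_{V_{p+i}}(v_{p+i})$ for $i = 1, \ldots, q$ with at least one strict, and the final strict comparison at the ends. Lemma~\ref{lemma3-2} then produces a caterpillar $T_1 \in \mathcal{C}_{\pi}$ with $\varphi(T_1) < \varphi(T)$, contradicting the minimality of $T$. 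This gives (\ref{cor3-1-1}) in the ``$\ge$'' case.

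Next, for the ``$\le$'' variant I would reflect the spine: let $T'$ denote the caterpillar obtained from $T$ by relabeling $v_j \mapsto v_{k+1-j}$. Then $T'$ is isomorphic to $T$, hence still minimum optimal in $\mathcal{C}_{\pi}$, and from the cut-edge definitions the reflection swaps $V_{\le j}(T) \leftrightarrow V_{\ge k+1-j}(T')$ and sends $V_j(T) \leftrightarrow V_{k+1-j}(T')$. With $p' = k+1-p$, the hypothesis $f_{V_{p-i}}(v_{p-i}) \le f_{V_{p+i}}(v_{p+i})$ at pivot $p$ in $T$ becomes $f_{V_{p'-i}}(v_{p'-i}) \ge f_{V_{p'+i}}(v_{p'+i})$ at pivot $p'$ in $T'$, and the target ``$\ge$'' conclusion in $T$ becomes exactly the ``$\le$'' conclusion in $T'$ already established. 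Part (ii) is handled by the same two-step pattern, substituting Lemma~\ref{lemma3-2-1} for Lemma~\ref{lemma3-2} and using the reflection $p' = k-p$, which absorbs the ``$+i+1$'' asymmetry present in the hypothesis of Lemma~\ref{lemma3-2-1}.

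I expect no substantive obstacle: the corollary is essentially the contrapositive of Lemmas~\ref{lemma3-2} and \ref{lemma3-2-1}, combined with the reflective symmetry of a caterpillar about its spine. The only care required is in the index bookkeeping under the reflection --- in particular verifying that $V_j$, $V_{\le j}$ and $V_{\ge j}$ swap as claimed and that the shifted pivot $p' = k-p$ (rather than $k+1-p$) is the correct one in case~(ii) --- but this is immediate from the definitions given just before Lemma~\ref{lemma3-2}.
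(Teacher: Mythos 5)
Your proposal is correct and matches the paper's (implicit) argument: the paper simply asserts the corollary ``follows from'' Lemmas~\ref{lemma3-2} and \ref{lemma3-2-1}, and your contrapositive application of those lemmas, together with the spine-reflection (with the correctly shifted pivot $p'=k-p$ in case~(ii)) to obtain the ``$\le$'' variants, is exactly the intended reading. No gaps.
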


Further, we present another property of a minimal optimal tree in
$\mathcal{C}_{\pi}$.

\begin{theorem}\label{theorem3-5} Let $\pi=(d_1, d_2,\cdots, d_k,\cdots d_{n})$
be the degree sequence of a tree with $d_1\ge d_2\ge \cdots \ge
d_k\ge 2$ and $d_{k+1}=\cdots =d_n=1$, where $k\ge 3$. If
$C(z_1,z_2,\cdots,z_k)$ is a minimal optional caterpillar in
$\mathcal{C}_{\pi}$ with $z_1\ge z_k$, then there exists a positive
integer number $1\le t\le k-1$ such that
\begin{eqnarray*}
z_1\ge z_2\cdots \ge z_{t-1}>z_t=d_k-2
\end{eqnarray*}
and
\begin{eqnarray*}
z_t\le z_{t+1}\cdots \le z_k.
\end{eqnarray*}
\end{theorem}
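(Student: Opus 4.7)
My plan is to argue in three short steps. First, using Corollary~\ref{cor3-3}(ii) at two spine positions simultaneously, I will show that $(z_1,\ldots,z_k)$ is V-shaped (no ``hump''). Second, taking $t$ to be the smallest index with $z_t=\min_j z_j=d_k-2$ will immediately give the two monotone chains of the theorem. Third, one more application of Corollary~\ref{cor3-3}(ii) at the right end of the spine will rule out $t=k$.

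As preparation, set $L_j := f_{V_{\le j}}(v_j)$ and $R_j := f_{V_{\ge j}}(v_j)$, with the conventions $L_0 = R_{k+1} = 1$. An elementary count in the caterpillar gives the recursions $L_j = 2^{z_j}(1 + L_{j-1})$ and $R_j = 2^{z_j}(1 + R_{j+1})$, so $L_j$ is strictly increasing and $R_j$ strictly decreasing in $j$. Since also $f_{V_j}(v_j) = 2^{z_j}$, the hypotheses of Corollary~\ref{cor3-3} become direct comparisons of the $z_j$'s. The central claim is that there are no indices $1 \le a < b \le k-1$ with $z_a < z_{a+1}$ and $z_b > z_{b+1}$. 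If such $(a,b)$ existed, Corollary~\ref{cor3-3}(ii) with $p=a$, $q=0$ in the ``$\le$'' direction would give $L_{a-1} \ge R_{a+2}$, while the same corollary with $p=b$, $q=0$ in the ``$\ge$'' direction would give $L_{b-1} \le R_{b+2}$. Combined with the strict monotonicities $L_{a-1} < L_{b-1}$ and $R_{a+2} > R_{b+2}$ (both immediate from $a<b$), this produces the absurd chain
\[
L_{a-1} \ge R_{a+2} > R_{b+2} \ge L_{b-1} > L_{a-1}.
\]
Hence $(z_1,\ldots,z_k)$ is V-shaped.

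Now let $t$ be the smallest index attaining $z_t = d_k - 2$. The V-shape just proved, together with the minimality of $t$, yields $z_1 \ge \cdots \ge z_{t-1} > z_t$ and $z_t \le z_{t+1} \le \cdots \le z_k$. To rule out $t=k$, suppose $t=k$: then $z_{k-1} > z_k$, so Corollary~\ref{cor3-3}(ii) with $p=k-1$, $q=0$ in the ``$\ge$'' direction forces $L_{k-2} \le R_{k+1} = 1$; but $L_{k-2} \ge L_1 = 2^{z_1+1} \ge 2$ since $k\ge 3$, a contradiction. Hence $t \le k-1$. The hypothesis $z_1 \ge z_k$ enters only in the degenerate case $t=1$, where combined with the non-decreasing chain $z_1 \le \cdots \le z_k$ it forces all $z_i$ equal, consistent with the conclusion.

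The main obstacle is the V-shape step: one must apply Corollary~\ref{cor3-3}(ii) at two different positions so that the conclusions straddle strictly monotone intervals of both $L$ and $R$, so that the resulting cyclic chain closes into a strict absurdity. The strict monotonicity of $L_j$ and $R_j$ along the spine is the crucial ingredient that turns two merely weak inequalities from the corollary into an outright contradiction; once this idea is in place, the identification of $t$ and the boundary bound $t\le k-1$ follow routinely.
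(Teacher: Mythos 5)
Your proof is correct, and it reaches the conclusion by a genuinely cleaner route than the paper's. Both arguments ultimately rest on Corollary~\ref{cor3-3} together with the strict monotonicity of $L_j=f_{V_{\le j}}(v_j)$ and $R_j=f_{V_{\ge j}}(v_j)$ along the spine, but you use only part (ii) with $q=0$, applied at two spine positions, to prove a uniform ``no ascent strictly precedes a descent'' lemma; the cyclic chain $L_{a-1}\ge R_{a+2}>R_{b+2}\ge L_{b-1}>L_{a-1}$ closes correctly because $a<b$ straddles strictly monotone stretches of both $L$ and $R$. The paper instead splits into three cases on the degree sequence ($d_1=\cdots=d_k$, $d_1>d_2=\cdots=d_k$, $d_2>d_k$), proves the key claim $z_k>d_k-2$ by a symmetric pairing argument whose center depends on the parity of $k+s$ (invoking part (i) or (ii) accordingly), and only then assembles the two monotone chains around the anchor indices $t$ and $l$ by essentially the same contrapositive-plus-monotonicity mechanism you use. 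Your organization buys three things: it eliminates the case analysis and the parity bookkeeping entirely; it replaces the paper's $z_k>d_k-2$ claim with the one-line exclusion of $t=k$ via $L_{k-2}\le R_{k+1}=1$ versus $L_{k-2}\ge 2^{z_1+1}\ge 2$; and it makes explicit the recursions $L_j=2^{z_j}(1+L_{j-1})$, $R_j=2^{z_j}(1+R_{j+1})$ that justify the monotonicity the paper uses tacitly. The paper's version, for its part, records the slightly stronger intermediate facts (e.g., the explicit value $2^{z_j}=f_{V_j}(v_j)$ comparisons at the plateau boundaries), but gains nothing essential from the case split. One small remark: your observation that the hypothesis $z_1\ge z_k$ matters only in the degenerate case $t=1$ is accurate --- it is really just a normalization under reversal of the spine --- and your treatment of that case is consistent with the statement.
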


\begin{proof}.
We consider the following three cases.

{\bf Case 1:} $d_1=\cdots=d_k$. Clearly, the assertion holds.

{\bf Case 2:} $d_1>d_2=\cdots=d_k$. Suppose (for contradiction) that
there exists $2\le l\le k-1$ such that $z_l=d_1-2$. If $l$ is odd,
let $2\le p=\frac{l+1}{2}\le k-1$ and $q=\frac{l-1}{2}$. Then
$f_{V_{p-i}}(v_{p-i})=f_{V_{p+i}}(v_{p+i})$ for $i=1, \cdots, q-1$
and $f_{V_{p-q}}(v_{p-q})=2^{d_2-2}<2^{d_1-2}=f_{V_{p+q}}(v_{p+1})$.
Hence by (i) of  Corollary~\ref{cor3-3}, we have
 $f_{V_{\le p-q-1}}(v_{p-q-1})\ge f_{V_{\ge p+q+1}}(v_{p+q+1})$,
 which implies
 $1=f_{V_0}(v_0) \ge f_{V_{\ge p+q+1}}(v_{p+q+1})=f_{V_{\ge l=1}}(v_{l+1})$. It is a
 contradiction. If $l$ is even, let $p=\frac{l}{2}$ and
 $q=\frac{l}{2}-1$.  Similarly, by   (ii) of  Corollary~\ref{cor3-3}, we have
$1=f_{V_0}(v_0) \ge f_{V_{\ge l=1}}(v_{l+1})$. It is a
 contradiction. So the assertion holds.

{\bf Case 3:} $d_2>d_k$. We have the following  Claim: $z_k>d_k-2$.
In fact, suppose that $z_k=d_k-2$. Then there exists a $3\le s\le k$
such that $z_{s-1}>z_s=\cdots =z_k=d_k-2$. If $k+s$ is odd, let
$p=\frac{k+s-1}{2}$ and $q=\frac{k-s+1}{2}$. Then
$f_{V_{p-i}}(v_{p-i})=f_{V_{p+i}}(v_{p+i}$ for $i=1, \cdots, q-1$
and
$$f_{V_{p-q}}(v_{p-q})=f_{V_{s-1}}(v_{s-1})=2^{z_{s-1}}>2^{z_k}
=f_{V_{k}}(v_{k})=f_{V_{p+q}}(v_{p+q}).$$
 Then by (i) of Corollary~\ref{cor3-3}, we have
 $$f_{V_{\le s-2}}(v_{s-2})=f_{V_{\le p-q-1}}(v_{p-q-q})>f_{V_{\ge
 p+q+1}}(v_{p+q+1})=f_{V_{k+1}}(v_{k+1})=1.$$
 It is a contradiction. If $k+s$ is even, by similar
  method and applying (ii) of Corollary~\ref{cor3-3}, we also get
  the contradiction.
 Hence the Claim holds.

 Further, there exists two integers $2\le t\le l\le k-1$  such that $
z_{t-1}>z_t=d_k-2$ and $z_t=\cdots= z_l<z_{l+1}$. Then
$f_{V_{t-1}}(v_{t-1})=2^{z_{t-1}}>2^{z_t}=f_{V_t}(v_t)$. Hence by
(i) of Corollary~\ref{cor3-3}, $f_{V_{\le t-2}}(v_{t-2})\le
f_{V_{\ge t+1}}(v_{t+1})$. Therefore, for any $1\le j\le t-2$ we
have
$$f_{V_{\le j-1}}(v_{j-1})< f_{V_{\le t-2}}(v_{t-2})\le
f_{V_{\ge t+1}}(v_{t+1})\le f_{V_{\le j+2}}(v_{j+2}).$$ Hence by (i)
of Corollary~\ref{cor3-3}, we have $f_{V_j}(v_{j})\ge
 f_{V_{j+1}}(v_{j+1})$, i.e.,  $2^{z_j}\ge 2^{z_{j+1}}$.
 So $z_1\ge z_2\ge \cdots\ge z_{t-1}>z_t$.

Since $z_l<z_{l+1}$,  we have $f_{V_{l}}(v_l)<f_{V_{l+1}}(v_{l+1})$.
By (i) of Corollary~\ref{cor3-3}, $f_{V_{\le
l-1}}(v_{l-1})>f_{V_{\ge l+2}}(v_{l+2})$. Then for any $l+1\le j\le
k-1$, we have
$$f_{V_{\le j-1}}(v_{j-1})\ge f_{V_{\le l-1}}(v_{l-1})\ge
f_{V_{\ge l+2}}(v_{l+2})> f_{V_{\le j+2}}(v_{j+2}).$$ Hence by (i)
of Corollary~\ref{cor3-3}, we have $f_{V_j}(v_{j})\le
 f_{V_{j+1}}(v_{j+1})$, i.e.,  $2^{z_j}\le 2^{z_{j+1}}$.
 So $z_l< z_{l+1}\le \cdots\le z_{k}$. We finish our proof.
\end{proof}

Similarly, there is a property of a maximum optional caterpillar
with a given degree sequence.

\begin{theorem}
Let $\pi=(d_1, d_2,\cdots, d_k,\cdots d_{n})$ be the degree sequence
of a tree with $d_1\ge d_2\ge \cdots \ge d_k\ge 2$ and
$d_{k+1}=\cdots =d_n=1$, where $k\ge 3$. If $C(z_1,z_2,\cdots,z_k)$
is a maximal optional caterpillar in $\mathcal{C}_{\pi}$, then there
exists an integer $1\le t\le k-1$ such that
\begin{eqnarray*}
z_1\le z_2\cdots \le z_{t-1}<z_t
\end{eqnarray*}
and
\begin{eqnarray*}
z_t\ge z_{t+1}\cdots \ge z_k.
\end{eqnarray*}
\end{theorem}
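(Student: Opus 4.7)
The plan is to mirror the proof of Theorem \ref{theorem3-5} (the minimum case) with systematic reversals of inequalities. The first step is to establish maximum analogues of Lemmas \ref{lemma3-2} and \ref{lemma3-2-1}: under the same hypothesis on the inner parts $f_{V_{p-i}}(v_{p-i}) \ge f_{V_{p+i}}(v_{p+i})$ for $i=1,\ldots,q$ (with at least one strict inequality), but with the \emph{reversed} outer condition $f_{V_{\le p-q-1}}(v_{p-q-1}) < f_{V_{\ge p+q+1}}(v_{p+q+1})$, the same tail-swap operation produces a caterpillar $T_1\in\mathcal{C}_{\pi}$ with $\varphi(T_1) > \varphi(T)$. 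The verification is an essentially identical computation to the one that proves Lemma \ref{lemma3-2}, differing only in the sign of $\varphi(T_1)-\varphi(T)$ because the outer product that dominates the expression has the opposite comparison.

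From these maximum lemmas one obtains an analogue of Corollary \ref{cor3-3} in which the direction of the outer inequality is \emph{preserved} rather than reversed: if $C(z_1,\ldots,z_k)$ is maximum optimal and the inner parts satisfy $f_{V_{p-i}}(v_{p-i})\ge f_{V_{p+i}}(v_{p+i})$ for $i=1,\ldots,q$ (at least one strict), then $f_{V_{\le p-q-1}}(v_{p-q-1}) \ge f_{V_{\ge p+q+1}}(v_{p+q+1})$; a parallel statement holds for the offset version coming from Lemma \ref{lemma3-2-1}. Intuitively, the maximum caterpillar keeps the two tails ordered in the \emph{same} direction as the inner parts, whereas the minimum caterpillar reverses them -- this is exactly the sign-flip that turns the decreasing-then-increasing profile of Theorem \ref{theorem3-5} into the increasing-then-decreasing profile sought here.

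With this in hand I would argue by contradiction, assuming the sequence $z_1,\ldots,z_k$ admits no strict peak of the required form. Then there must exist either a strict interior local minimum (indices $a<b<c$ with $z_a>z_b$ and $z_b<z_c$) or a violation at a boundary. Choosing $p$ at the center of the offending pattern and iterating the max corollary, we propagate inequalities outward until we end up comparing $f_{V_0}(v_0)=1$ with $f_{V_{k+1}}(v_{k+1})=1$; together with an accumulated strict inequality this is a contradiction. The main obstacle is the same parity bookkeeping that appears in the proof of Theorem \ref{theorem3-5}: depending on whether the offending pattern is centered at a vertex or between two vertices one must invoke either part (i) or part (ii) of the max corollary, and one must select the extremal violation so that the outward propagation actually reaches a strict $1$-vs-$1$ boundary comparison rather than dying prematurely. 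Apart from this bookkeeping the argument is routine, since $f_{V_j}(v_j) = 2^{z_j}$ is strictly monotone in $z_j$, so comparisons of $f$-values translate directly into comparisons of the $z_j$, which in turn yield the claimed unimodality.
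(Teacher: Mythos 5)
Your proposal is correct and follows exactly the route the paper intends: the paper in fact omits this proof entirely, stating only that it is ``similar to that of Theorem~\ref{theorem3-5},'' and your plan---establishing max-analogues of Lemmas~\ref{lemma3-2} and~\ref{lemma3-2-1} with the outer inequality reversed and conclusion $\varphi(T_1)>\varphi(T)$, deducing a version of Corollary~\ref{cor3-3} in which the outer inequality is preserved rather than reversed, and then running the same outward-propagation/parity case analysis---is precisely the ``similar'' argument being alluded to. Your sign analysis (inner $\ge$ forcing outer $\ge$ for a maximum-optimal caterpillar) is the correct reversal, so the proposal fills in the omitted proof faithfully.
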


\begin{proof} The proof is similar to that of  Theorem~\ref{theorem3-5} and
omitted.
\end{proof}

\section{The optimal minimal  trees with many leaves}
In this section, for a given degree sequence $\pi=(d_1, d_2,\cdots,
d_k,\cdots d_{n})$ with at least $n-5$ leaves, we give the minimal
optimal trees with the minimum number
 of subtrees in ${\mathcal{T}}_{\pi}$. Moreover, the minimal optimal trees may be not unique.

\begin{theorem}
Let $\pi=(d_1, d_2,\cdots, d_k,\cdots, d_{n})$ be tree degree
sequence with $n-k$ leaves for $2\le k\le 4$. Then the minimal tree
in ${\mathcal{T}}_{\pi}$ is unique. In other words,

(i). If $k=2$, then $\varphi(T)=2^{n-2}+2^{d_1-1}+2^{d_2-1}+n-2$ for
any  $T\in {\mathcal{T}}_{\pi}$.

(ii). If $k=3$, then for any $T\in {\mathcal{T}}_{\pi}$,
$$\varphi(T)\ge \varphi(C(d_1-2,d_3-2,d_2-2))=n-3+2^{d_1-1}+2^{d_2-1}
+2^{d_3-2}+2^{d_1+d_3-3}+2^{d_3+d_2-3}+2^{n-3}.$$ with equality if
and if $T$ is the caterpillar $C(d_1-2,d_3-2,d_2-2)$.

(iii). If $k=4$, then
\begin{eqnarray*}
\varphi(T) &\ge &\varphi(C(d_1-2,d_4-2,d_3-2,d_2-2))\\
&=&n-4+2^{d_1-1}+2^{d_2-1}+2^{d_3-2}+2^{d_4-2}+2^{d_1+d_4-3}
+2^{d_3+d_4-4}+2^{d_3+d_2-3}\\
&&+2^{d_1+d_4+d_3-5}+2^{d_2+d_3+d_4-5}+2^{n-4}
\end{eqnarray*}
with equality if and only if  $T$ is the caterpillar
$C(d_1-2,d_4-2,d_3-2,d_2-2)$.
\end{theorem}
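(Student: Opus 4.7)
The plan is to combine the structural reductions already established with a direct closed-form computation of $\varphi(C(z_1,\ldots,z_k))$, leaving only a handful of candidates to compare by hand. First, Theorem~\ref{theorem2-1} reduces attention to caterpillars $C(z_1,\ldots,z_k)$ in $\mathcal{C}_\pi$, where $(z_1,\ldots,z_k)$ is a permutation of $(d_1-2,\ldots,d_k-2)$. For $k=2$ the degree sequence forces $T$ to be the double star whose central edge joins vertices of degrees $d_1$ and $d_2$, so (i) follows immediately by partitioning subtrees into the $n-2$ single pendants, the subtrees containing exactly one central vertex together with an arbitrary subset of its $d_i-1$ remaining neighbours, and the subtrees containing both central vertices together with an arbitrary subset of the $d_1+d_2-2$ non-central pendants.

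For $k=3,4$ I would derive an explicit formula for $\varphi(C(z_1,\ldots,z_k))$ by classifying subtrees according to how they meet the spine $v_0v_1\cdots v_{k+1}$. Such an intersection is a (possibly empty) contiguous subpath $v_a,\ldots,v_b$ with $0\le a\le b\le k+1$; apart from the $\sum_i z_i$ single-pendant subtrees hanging off internal vertices, each such pair contributes $\prod_{\ell=\max(a,1)}^{\min(b,k)}2^{z_\ell}$ subtrees (with empty product $1$), since for each internal $v_\ell$ in the range one independently chooses a subset of the $z_\ell$ pendants at $v_\ell$. For $k=3$ this collapses to
\[
\varphi(C(z_1,z_2,z_3))=n-3+2^{n-3}+2^{z_1+1}+2^{z_3+1}+2^{z_2}+2^{z_1+z_2+1}+2^{z_2+z_3+1},
\]
and an analogous (longer) polynomial holds for $k=4$.

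I would then invoke Theorem~\ref{theorem3-5}: up to reversing the spine, the pendant sequence of any minimum optimal caterpillar is valley-shaped with valley value $z_t=d_k-2$. Writing $a=d_1-2\ge b=d_2-2\ge c=d_3-2\ge d=d_4-2\ge 0$, enumeration of valley-shaped permutations leaves at most two candidates for $k=3$, namely $C(a,c,b)$ and $C(a,b,c)$, and at most three for $k=4$: the $C_A=C(a,d,c,b)$ of the statement together with $C_B=C(a,b,d,c)$ and $C_C=C(a,c,d,b)$. A short telescoping of the closed form yields
\[
\varphi(C(a,b,c))-\varphi(C(a,c,b))=(2^b-2^c)(2^{a+1}-1)\ge 0
\]
for $k=3$, and
\[
\varphi(C_C)-\varphi(C_A)=2(2^c-2^d)(2^a-2^b)\ge 0
\]
for $k=4$, while $\varphi(C_B)-\varphi(C_A)$ simplifies to a sum of two non-negative products involving $(2^b-2^c)$ and $(2^b-2^d)$. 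These inequalities give the bounds in (ii) and (iii); the stated closed forms then follow by substituting the winning permutation into the formula above. For uniqueness, each equality case forces one of the factors $2^b-2^c$, $2^c-2^d$, $2^a-2^b$ to vanish, and a short case check shows the ``losing'' candidate then coincides with $C_A$ either literally or after reversing its spine, hence is isomorphic to $C_A$.

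The main obstacle is the $k=4$ case: Theorem~\ref{theorem3-5} on its own does not single out $C_A$ among the three valley-shaped candidates, so a direct algebraic comparison of $\varphi$-values is needed. The fiddliest piece is the simplification of $\varphi(C_B)-\varphi(C_A)$; it is cleanest to expand $\varphi$ as a polynomial in $2^a,2^b,2^c,2^d$, pair up matching monomials between $C_A$ and $C_B$, and then sign each group via $a\ge b\ge c\ge d$.
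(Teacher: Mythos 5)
Your proposal is correct, and it follows the paper's skeleton (reduce to caterpillars via Theorem~\ref{theorem2-1}, then use the valley-shape property of Theorem~\ref{theorem3-5} to cut the candidate list down to two or three permutations) but diverges in how the surviving candidates are separated. The paper disposes of the $k=3$ case by citing a theorem whose label does not resolve in the source, and of the $k=4$ case by a bare appeal to Lemma~\ref{lemma3-2} without exhibiting the $p,q$ for which its hypotheses hold; checking those hypotheses for, say, $C(d_1-2,d_3-2,d_4-2,d_2-2)$ is not immediate, since the natural choices of $p,q$ fail the end-component condition $f_{V_{\le p-q-1}}(v_{p-q-1})>f_{V_{\ge p+q+1}}(v_{p+q+1})$. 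You instead derive the closed form $\varphi(C(z_1,\dots,z_k))=\sum_i z_i+\sum_{0\le a\le b\le k+1}\prod_{\ell=\max(a,1)}^{\min(b,k)}2^{z_\ell}$ by classifying subtrees by their (contiguous) intersection with the spine, and compare candidates by explicit factorizations such as $\varphi(C(a,b,c))-\varphi(C(a,c,b))=(2^b-2^c)(2^{a+1}-1)$ and $\varphi(C(a,c,d,b))-\varphi(C(a,d,c,b))=2(2^c-2^d)(2^a-2^b)$; I verified these and the decomposition $\varphi(C_B)-\varphi(C_A)=(2^b-2^c)(2^d+2^{a+d+1}-1)+2(2^b-2^d)(2^a-2^c)$, all nonnegative under $a\ge b\ge c\ge d$, with equality cases that collapse the losing candidate onto $C_A$ up to spine reversal. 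This is essentially the computation the paper itself performs only in the $k=5$ case (Theorem~\ref{theorem4-2}); carrying it out already for $k=3,4$ makes your argument self-contained, yields the stated closed forms as a by-product, and settles the equality/uniqueness claims explicitly, at the cost of somewhat longer bookkeeping than the paper's (underjustified) lemma citations.
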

\begin{proof}
 (i).  $k=2$.  Then $T$ must be  $C(d_1-2, d_2-2)$ or $C(d_2-1, d_1-2)$, it is to see
 that
$$ \varphi(C(d_1-2, d_2-2))=\varphi(C(d_2-1,
d_1-2))=2^{n-2}+2^{d_1-1}+2^{d_2-1}+n-2.$$

  (ii). $k=3$. Then $T$ must be one of $C(d_1-2,d_2-2,d_3-2)$,$C(d_1-2,d_3-2,d_2-2)$ and
  $C(d_2-2,d_1-2,d_3-2)$. By Theorem~\ref{theorem3-6},
  $$\varphi(T)\ge \varphi(C(d_1-2,d_2-2,d_3-2))$$
  with equality if and only if $T$ is $C(d_1-2,d_3-2,d_2-2))$ and
  $$\varphi(C(d_1-2,d_3-2,d_2-2))=n-3+2^{d_1-1}+2^{d_2-1}
 +2^{d_3-2}+2^{d_1+d_3-3}+2^{d_3+d_2-3}+2^{n-3}.$$

(iii). $k=4$.  Let $T^*$ be any minimal optimal tree in the set of
all trees with $\pi=(d_1, d_2,d_3, d_4,1, \cdots,1)$ and $d_4\ge 2$.
By Theorem~\ref{theorem2-1} and \ref{theorem3-5}, $T^*$ must be
caterpillar and be one of  $C(d_1-2,d_2-2,d_4-2,d_3-2)$,
$C(d_1-2,d_3-2,d_4-2,d_2-2)$ and  $C(d_1-2,d_4-2,d_3-2,d_2-2)$.
 Further, by lemma~\ref{lemma3-2}, $T^*$ must
   be $C(d_1-2,d_4-2,d_3-2,d_2-2)$ and
$$\varphi(T^*)= n-4+2^{d_1-1}+2^{d_2-1}+2^{d_3-2}+2^{d_4-2}+2^{d_1+d_4-3}+2^{d_3+d_4-4}+2^{d_3+d_2-3}+2^{d_1+d_4+d_3-5}+2^{d_2+d_3+d_4-5}+2^{n-4}.$$
This completes the proof.
\end{proof}

\begin{theorem}\label{theorem4-2}
Let $\pi=(d_1, d_2,\cdots, d_5,1, \cdots,1)$ be tree degree sequence
with $n-5$ leaves.

(i). If $2^{d_1}> 2^{d_3-1}(1+2^{d_2-1})$ and $d_4\neq d_5$, then
there is exact one minimal optional tree
$C(d_1-2,d_5-2,d_4-2,d_3-2,d_2-2)$
 in $ {\mathcal{T}}_{\pi}$

(ii). If $2^{d_1}=2^{d_3-1}(1+2^{d_2-1})$ or $d_4=d_5$, then there
are  exact two minimal optional trees
$C(d_1-2,d_5-2,d_4-2,d_3-2,d_2-2)$ and
$C(d_1-2,d_4-2,d_5-2,d_3-2,d_2-2)$ in $ {\mathcal{T}}_{\pi}$.

(iii). If $2^{d_1}<2^{d_3-1}(1+2^{d_2-1})$ and $d_4\neq d_5$, there
is exact one minimal optional tree
$C(d_1-2,d_4-2,d_5-2,d_3-2,d_2-2)$ in ${\mathcal{T}}_{\pi}$.
\end{theorem}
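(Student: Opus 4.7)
The plan is to combine Theorems~\ref{theorem2-1} and \ref{theorem3-5} with Lemma~\ref{lemma3-2} to narrow $T^{\ast}$ down to two explicit caterpillars, and then to compute $\varphi(T_1)-\varphi(T_2)$ directly and read the three cases off the sign.

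First I would invoke Theorem~\ref{theorem2-1} so that $T^{\ast}=C(z_1,z_2,z_3,z_4,z_5)$ is a caterpillar, and use the reversal isomorphism $C(z_1,\ldots,z_5)\cong C(z_5,\ldots,z_1)$ to assume $z_1\ge z_5$. Theorem~\ref{theorem3-5} then yields a valley index $t\in\{1,\ldots,4\}$ with $z_1\ge\cdots\ge z_{t-1}>z_t=d_5-2\le z_{t+1}\le\cdots\le z_5$, and the inequalities $z_1\ge z_j$ for all $j$ force $z_1=d_1-2$. Enumerating the admissible arrangements of $d_2-2,d_3-2,d_4-2$ (with $d_5-2$ placed at the valley) produces seven candidate caterpillars, two of which are
$$T_1=C(d_1-2,d_5-2,d_4-2,d_3-2,d_2-2)\quad\text{and}\quad T_2=C(d_1-2,d_4-2,d_5-2,d_3-2,d_2-2).$$
For each of the other five candidates I would exhibit, via Lemma~\ref{lemma3-2} or part (ii) of Corollary~\ref{cor3-3}, indices $p,q$ such that a middle-block reversal of the spine strictly decreases the number of subtrees. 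The monotonicity hypothesis $f_{V_{p-i}}(v_{p-i})\ge f_{V_{p+i}}(v_{p+i})$ follows from the sorted ordering $d_1\ge\cdots\ge d_5$, and the outer strict inequality $f_{V_{\le p-q-1}}(v_{p-q-1})>f_{V_{\ge p+q+1}}(v_{p+q+1})$ holds because the ``heavier'' end of the spine lies on the left. After these eliminations only $T_1$ and $T_2$ survive.

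To compare $T_1$ and $T_2$, I would use the standard caterpillar decomposition: in $C(y_1,\ldots,y_5)$ with spine $v_0v_1\cdots v_6$, every subtree is either a single pendant leaf off the spine, or is determined by an interval $[v_i,v_j]$ of spine vertices together with an arbitrary subset of the pendant leaves at each included interior $v_l$. Setting $y_0=y_6=0$ this gives
$$\varphi(T)=(n-7)+\sum_{0\le i\le j\le 6}\prod_{l=i}^{j}2^{y_l}.$$
The sequences for $T_1$ and $T_2$ differ only by swapping the entries $d_5-2$ and $d_4-2$ at positions 2 and 3, so almost every monomial cancels in $\varphi(T_1)-\varphi(T_2)$; collecting the surviving terms the difference factorises cleanly as
$$\varphi(T_1)-\varphi(T_2)=\bigl(2^{d_5-2}-2^{d_4-2}\bigr)\bigl(2^{d_1-1}-2^{d_3-2}(1+2^{d_2-1})\bigr).$$
The first factor is $\le 0$ with equality iff $d_4=d_5$, and the sign of the second factor is precisely the dichotomy in the hypothesis. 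Thus case (i) yields $\varphi(T_1)<\varphi(T_2)$ with $T_1$ the unique minimum; case (iii) yields $\varphi(T_2)<\varphi(T_1)$ with $T_2$ unique; case (ii) yields equality, so both $T_1$ and $T_2$ are minimum optimal.

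The main obstacle I expect is the bookkeeping in the elimination step: each of the five discarded candidates must be paired with the correct $(p,q)$ so that both hypotheses of Lemma~\ref{lemma3-2} hold, which requires evaluating the recursively defined quantities $f_{V_{\le j}}(v_j)$ and $f_{V_{\ge j}}(v_j)$ at several cuts. The factorisation of $\varphi(T_1)-\varphi(T_2)$ is then mechanical.
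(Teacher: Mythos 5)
Your proposal follows essentially the same route as the paper: Theorem~\ref{theorem2-1} and Theorem~\ref{theorem3-5} reduce to valley-shaped caterpillars, Lemma~\ref{lemma3-2} cuts the candidates down to $C(d_1-2,d_5-2,d_4-2,d_3-2,d_2-2)$ and $C(d_1-2,d_4-2,d_5-2,d_3-2,d_2-2)$, and the sign of the explicitly computed difference of subtree counts decides the three cases. Your factorisation $(2^{d_5-2}-2^{d_4-2})\bigl(2^{d_1-1}-2^{d_3-2}(1+2^{d_2-1})\bigr)$ is the paper's expression up to an overall factor of $2$ in the second bracket, so the sign analysis and conclusions coincide.
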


\begin{proof} Let $T^*$ be any minimal optimal tree in
${\mathcal{T}}_{\pi}$. By Theorem~\ref{theorem2-1} and
\ref{theorem3-5}, $T^*$ must be  caterpillar and
$C(d_1-2,x_2,x_3,x_4,d_2-2)$, where $(x_2,x_3,x_4)$ is a permutation
of $(d_3-2,d_4-2,d_5-2)$.

Further, by Lemma~\ref{lemma3-2} , $T^*$  must be
$C(d_1-2,d_5-2,d_4-2,d_3-2,d_2-2)$ or
$C(d_1-2,d_4-2,d_5-2,d_3-2,d_2-2)$. Moreover,
\begin{eqnarray*}
&&\varphi(C(d_1-2,d_5-2,d_4-2,d_3-2,d_2-2))-\varphi(C(d_1-2,d_4-2,d_5-2,d_3-2,d_2-2))\\
&=&2^{d_5-2}+2^{d_1+d_5-3}+2^{d_4-2}+2^{d_3+d_4-4}+2^{d_2+d_3+d_5-3}\\
&&-[2^{d_4-2}+2^{d_1+d_4-3}+2^{d_5-2}+2^{d_3+d_5-4}+2^{d_2+d_3+d_4-3}]\\
&=&(2^{d_5-2}-2^{d_4-2})[2^{d_1}-2^{d_3-1}(1+2^{d_2-1})].
\end{eqnarray*}
If $2^{d_1}> 2^{d_3-1}(1+2^{d_2-1})$ and $d_4\neq d_5$, then
$$\varphi(C(d_1-2,d_5-2,d_4-2,d_3-2,d_2-2))-\varphi(C(d_1-2,d_4-2,d_5-2,d_3-2,d_2-2))<0.$$
Hence (i)holds.

If $2^{d_1}< 2^{d_3-1}(1+2^{d_2-1})$ and $d_4\neq d_5$, then
$$\varphi(C(d_1-2,d_5-2,d_4-2,d_3-2,d_2-2))-\varphi(C(d_1-2,d_4-2,d_5-2,d_3-2,d_2-2))>0.$$
Hence (iii)holds.

If $2^{d_1}= 2^{d_3-1}(1+2^{d_2-1})$ or $d_4=d_5$, then
 $$\varphi(C(d_1-2,d_5-2,d_4-2,d_3-2,d_2-2))=\varphi(C(d_1-2,d_4-2,d_5-2,d_3-2,d_2-2)).$$
 Hence (ii) holds.
\end{proof}

\noindent {\bf Remark} From Theorem~\ref{theorem4-2}, we can see
that the minimal optimal trees depend on the values of all
components of the tree degree sequences and not unique, while the
maximal optimal tree is unique for a given tree degree sequence. It
illustrates that it is difficult to character the minimal optimal
trees for a given degree sequence of a tree.
\frenchspacing

\end{document}